\documentclass[11pt]{article}
\usepackage[utf8]{inputenc}
\usepackage[english]{babel}
\usepackage{amsfonts}
\usepackage{amssymb}
\usepackage{amsmath}
\usepackage{amsthm}

\newtheorem{definition}{Definition}
\newtheorem{proposition}{Proposition}
\newtheorem{theorem}{Theorem}

\newtheorem{lemma}{Lemma}
\newtheorem{corollary}{\noindent Corollary}
\newtheorem{problem}{\noindent Problem}

\title{Three theorems on the uniqueness of the Plancherel measure from different viewpoints\thanks{The work was carried out at the Institute for Information Transmission Problems, and was funded by the Russian Science Foundation (Project no. 14-50-00150).}}

\author{A.~M.~Vershik\thanks{St.~Petersburg Department of Steklov Institute of Mathematics,
St.~Petersburg State University, and Institute for Information Transmission Problems.
E-mail: {\tt avershik@gmail.com}.}}

\date{October 2, 2018}

\begin{document}

\maketitle

\begin{flushright}{\emph{To V.~M.~Buchstaber with friendly wishes on the occasion of his 75th birthday}}
\end{flushright}

\begin{abstract}
We consider three uniqueness theorems: one from the theory of meromorphic functions, another one from asymptotic combinatorics, and the third one about representations of the infinite symmetric group.

The first theorem establishes the uniqueness of the function~$\exp z$ in a class of entire functions.
The second one is about the uniqueness of a random monotone nondegenerate numbering of the two-dimensional lattice~$\Bbb Z^2_+$, or  of a nondegenerate central measure on the space of infinite Young tableaux.
And the third theorem establishes the uniqueness of a representation of the infinite symmetric group~${\frak S}_{\infty}$ whose restrictions to finite subgroups have vanishingly few invariant vectors.

But in fact all three theorems are, up to a nontrivial rephrasing of conditions from one area of mathematics in terms of another area, the same theorem! Up to now, researchers working in each of these areas were not aware of this equivalence. The parallelism of these uniqueness theorems on the one hand, and the difference of their proofs on the other hand call for a deeper analysis of the nature of uniqueness and suggest to transfer the methods of proof between the areas.

More exactly, each of these theorems establishes the uniqueness of the so-called Plancherel measure, which is the main object of our paper. In particular, we show that this notion is general for all locally finite groups.
\end{abstract}

\section{Introduction}
The term ``Plancherel measure'' for the infinite symmetric group was suggested by me and introduced in our joint paper with S.~V.~Kerov~\cite{VK-77}. Later, this notion proved to be extremely important not only in the original context. {\it In the asymptotic representation theory} of inductive families of groups and algebras it plays the same role as Haar measure in the theory of locally compact groups. In the present paper, we also illustrate this by examples, in particular, by showing how diverse are situations where it appears. But the motivation behind the choice of the name for this measure does not fully agree with the conventional use of this term; this question is discussed below. This difference is especially clear in the combinatorial definition of the Plancherel measure as a measure on lattice numberings.

With regard to the theory of symmetric groups, it is natural to compare the Plancherel measure with the Poisson--Dirichlet measure, which arises, as shown in our previous papers~\cite{VSh1, VSh2}, in the study of limit distributions of conjugacy classes (i.e., cycle lengths of random permutations). We describe completely different situations in which this measure arises implicitly or explicitly. To understand the overall picture of asymptotic theory, it is of importance to consider the Plancherel measure and its properties as an example of a distinguished central measure on the path space of a graded graph, or as an example of a special Gibbs measure of maximal entropy in constructions of statistical physics.

We begin with a theorem having the most beautiful and clear statement and the most difficult known direct proof. Namely, Edrei's theorem on the uniqueness of the exponential function as a totally positive entire nonconstant function without zeros, see~\cite{Edr,W,ASM}, early papers by Schoenberg~\cite{Sch}, papers by Krein and Gantmacher~\cite{GK}, the book~\cite{Kar}, etc.

Then we turn to the main part of the paper, where we describe the Plancherel measure from different viewpoints and consider the uniqueness theorem for this measure as a nondegenerate central measure on the path space of the Young graph and as a nondegenerate homogeneous monotone random numbering of the two-dimensional lattice~${\Bbb Z}_+^2$.

The interest to the Plancherel measure is due to the fact that it  became the source of uniting such different facts from the theory of entire functions, representation theory, combinatorics, and the theory of invariant measures. That is why we discuss in detail the group nature of Plancherel measures for locally compact groups, both in the most general context and in the case of locally finite groups and, in particular, the infinite symmetric group. Here, the main uniqueness theorem looks as a theorem on nondegenerate ergodic central measures on the path space of the Young graph.

We treat separately the combinatorial definition of the Plancherel measure as the limit of Plancherel measures of finite groups; in particular, in the case of symmetric groups this is a measure on lattice numberings.

It is important to note that, in contrast to the finite case and the usual understanding of Plancherel measures, this measure can be defined meaningfully not on the space of classes of irreducible representations (Young diagrams in the case of symmetric groups), but only on the space of tableaux, i.e., sequences of representations.

The uniqueness problem is the deepest and most difficult question; it is convenient to state it in a general form, as a purely combinatorial problem, for instance, as a theorem on numberings of multidimensional lattices. It would not be an overstatement to compare the existence and uniqueness theorem for numberings of the two-dimensional lattice with  the existence and uniqueness theorem for Haar measure.

The third uniqueness theorem is a rephrasing of the previous one in terms of representations of the infinite symmetric group and their restrictions to finite subgroups. Namely, it states that the regular representation of the group~$\frak{S}_{\Bbb N}$ is distinguished among all representations with character. Here, the focus moves to the asymptotics of the first row of a random Young diagram, whose role in the asymptotic representation theory of symmetric groups  is well known.

In this paper, we do not discuss numerous proofs of these theorems (their list is given below), leaving such a discussion to another occasion, partly because the author does not think that we know the ``proof from the Book'' (in the sense of P.~Erd\"os) for any of these theorems.

The paper is an extended version of my talk at the conference {\it Algebraic Topology, Combinatorics, and Mathematical Physics} (Moscow, May 2018) dedicated to the 75th anniversary of V.~M.~Buchstaber.

\section{A nonvanishing totally positive entire function is the exponential function}

We begin with  the most clear uniqueness theorem, essentially belonging to classical complex analysis.

\begin{theorem}[A.~Edrei, M.~Assein, A.~Whitney, I~Schoenberg \cite{ASM,Edr,W}]
Let $\varphi(z)$ be an entire function on the complex line~$\Bbb C$,
 $$
\varphi(z)=\sum_{n=0}^{\infty}c_n z^n, \qquad c_{-n}=0 \quad \text{for all}\quad n>0,
$$
satisfying the following conditions.

{\rm 1)} \emph{Total positivity:} for every $n\in {\Bbb Z}$, for every collection of indices $\{i_1,\dots, i_n; j_1, \dots, j_n\}$ where $i_s,j_s \in {\Bbb Z}$,
$$
\det ||c_{i_s-j_t}||_{s,t=1,\dots, n} \geq 0.
$$

{\rm 2)} \emph{Nondegeneracy:} the function does not vanish, $\varphi(z)\ne 0$;  and (to simplify the statement) \emph{normalization:}
$\varphi(0)=\varphi'(0)=1$.

Then
 $$\varphi(z)=\exp z.
 $$

If, keeping the total positivity condition, we assume that  $\varphi(z)$ is only meromorphic (i.e., allow it to have poles) and waive the nonvanishing condition and the normalization, then the general form of such a function is as follows:
  $$
  \varphi(z)=z^m e^{\gamma z}\prod_{n=1}^{\infty}\frac{1+\alpha_n z}{1-\beta_n z},
 $$
where $\alpha_1\geq \alpha_2 \geq \dots \geq 0$; $\beta_1\geq \beta_2 \geq \dots \geq 0$; $\gamma \geq 0$;
   $\sum_n \alpha_n +\sum_n \beta_n +\gamma =1$; $m\in {\Bbb N}_+$.
 \end{theorem}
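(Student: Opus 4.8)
The plan is to deduce the uniqueness of $\exp z$ from the full classification of \emph{P\'olya frequency sequences} --- the nonnegative sequences $(c_n)_{n\ge 0}$ whose Toeplitz matrix $\|c_{i-j}\|$ is totally positive --- and then to prove that classification. Granting it, the final conclusion is immediate: if $\varphi$ is entire it has no poles, so every $\beta_n=0$; if in addition it has no zeros then (the zeros and poles of the displayed product lie on opposite sides of the origin, so none can cancel) $m=0$ and every $\alpha_n=0$, whence $\varphi(z)=e^{\gamma z}$; finally $\varphi(0)=1$ is automatic and $\varphi'(0)=\gamma=1$, so $\varphi(z)=e^z$. In the same way the side conditions in the general form must come out of the proof: $\gamma\ge 0$, $\sum\alpha_n+\sum\beta_n+\gamma=1$, $m\in\mathbb N_+$, with the monotonicity of $(\alpha_n)$ and $(\beta_n)$ merely a choice of ordering. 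So all of the content lies in the classification.

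The ``synthesis'' half --- every function of the displayed form really does generate a totally positive Toeplitz matrix --- I would dispose of first. Convolution of sequences corresponds to multiplication of infinite Toeplitz matrices, and by the Cauchy--Binet formula a minor of a product is a sum of products of minors; hence totally positive factors produce a totally positive product. The elementary factors are totally positive by inspection: $1+\alpha z$ yields a bidiagonal Toeplitz matrix, $1/(1-\beta z)$ yields a positive-diagonal conjugate of the all-ones lower-triangular matrix, and $e^{\gamma z}=\lim_n(1+\gamma z/n)^n$ is a limit of such products. Since total positivity is preserved under the entrywise limits needed to pass to infinite products (determinants being continuous), this direction is complete, and the same limit forces $\sum\alpha_n+\sum\beta_n<\infty$.

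The core is the converse. I would first settle the polynomial case (Aissen, Schoenberg, Whitney): a polynomial $\sum_{n=0}^N c_n z^n$ with $c_n\ge 0$ has a totally positive Toeplitz matrix if and only if all its zeros are real and nonpositive. The ``if'' half is the synthesis step above; for ``only if'' one shows that a nonreal zero, or a positive zero, would render some small Toeplitz minor negative --- already a $2\times2$ minor gives log-concavity $c_n^2\ge c_{n-1}c_{n+1}$, and the higher minors encode precisely the stronger inequalities equivalent to real-rootedness with nonpositive roots. For a general, infinite totally positive sequence I would then pass to the limit: the coefficient inequalities forced by total positivity control the growth of $(c_n)$ --- hence the order of $\varphi$ and, through a Pringsheim-type argument, the location of the pole of $\varphi$ nearest the origin --- and exhibit $\varphi$, locally uniformly on $\mathbb C$, as a limit of rational functions $\prod(1+\alpha_i z)/\prod(1-\beta_j z)$ whose numerator and denominator are real-rooted with roots of the correct sign. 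A Hadamard-factorization and Hurwitz argument then forces $\varphi$ into the displayed form, the single exponential factor $e^{\gamma z}$ appearing as the limit of a cloud of vanishingly small $\alpha_i$'s; the normalization then identifies $\gamma+\sum\alpha_n+\sum\beta_n$ with $\varphi'(0)=1$.

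I expect the passage to the limit to be the main obstacle: converting the combinatorial positivity of infinitely many Toeplitz determinants into the analytic statement that $\varphi$ is meromorphic of order at most one with only real zeros and poles, of the prescribed signs. Edrei's proof carries this out via Nevanlinna theory together with delicate estimates on the Toeplitz determinants, and, as the paper itself emphasizes, no ``proof from the Book'' is known. A different route --- pursued in the later sections --- is to recognize the same classification as Thoma's description of the characters of $\mathfrak S_\infty$, so that the complex-analytic theorem becomes a corollary of the representation-theoretic one; but transferring a proof across that bridge is itself nontrivial.
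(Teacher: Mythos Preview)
The paper does not prove this theorem. It is stated with attribution to \cite{ASM,Edr,W}, and the introduction says explicitly: ``In this paper, we do not discuss numerous proofs of these theorems \ldots\ leaving such a discussion to another occasion.'' There is therefore no proof in the paper to compare your proposal against; the theorem functions here purely as a quoted result whose equivalence with the other two uniqueness statements is the paper's point.

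Taken on its own terms, your outline is the classical Aissen--Schoenberg--Whitney/Edrei route and is sound as a plan. The reduction of the nonvanishing entire case to the general classification is correct, the synthesis direction via Cauchy--Binet and limits is standard, and you rightly locate the hard step in Edrei's analytic passage from the polynomial case to the general meromorphic one. Two small points. First, in the polynomial step, positive real zeros are already excluded by the $1\times 1$ minors ($c_n\ge 0$ forces $p(x)>0$ for $x>0$), so the content is entirely about nonreal zeros; and it is not quite accurate that ``the higher minors encode precisely the stronger inequalities equivalent to real-rootedness'' as if this were a known dictionary --- the ASW argument constructs a \emph{specific} minor that goes negative in the presence of a nonreal root, and that construction is the substance of their paper, not a lookup. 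Second, your remark that the normalization $\gamma+\sum\alpha_n+\sum\beta_n=1$ comes from $\varphi'(0)=1$ is right in spirit, but note that the paper's statement of the general form retains this constraint even after ``waiving the normalization''; this is a quirk of the statement (tied to the Thoma parametrization the paper has in mind) rather than something your argument needs to produce unconditionally.
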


There is a similar theorem for Laurent series, but we will not consider it. The notion of total positivity appeared in the 1930s in works by Schoenberg and von Neumann (on the geometry of Hilbert spaces), and simultaneously in pre-war papers by M.~G.~Krein and F.~Gantmacher~\cite{GK} in connection with functional analytic methods in the study of differential equations. It also arises in statistics and the theory of convex sets (the so-called Chebyshev curves), see S.~Karlin's book~\cite{Kar}. In recent years, the interest to total positivity has considerably increased in connection with algebraic and representation-theoretic problems (papers by G.~Lusztig~\cite{Lus}, A.~Zelevinsky and S.~Fomin~\cite{FZ}, and many others). The recent paper~\cite{BGl} by V.~M.~Buchstaber and A.~Glutsyuk also uses this notion. But here the notion is greatly extended: one considers not Toeplitz matrices as above, but arbitrary matrices. From this point of view, it would be interesting to consider this notion for arbitrary groups (rather than only~$\Bbb Z$ and $\Bbb R$), see below.

In E.~Thoma's paper~\cite{T}, total positivity appeared as follows (to the author's knowledge, this connection was known already to M.~G.~Krein). Consider a positive definite (in the ordinary sense) normalized central function~$\chi(\cdot)$ on the infinite symmetric group~${\frak S}_{\Bbb N}$, i.e., a character of this group. Assume that it is indecomposable, i.e., cannot be written as a nontrivial convex combination of other characters, and restrict this character (regarded as a function on the group) to the conjugacy classes of cycles, i.e., to the semigroup~$\Bbb Z_+$.\footnote{A character $\chi(\cdot)$ takes the same value~$\chi(n)$ at all cycles of length~$n$, hence it produces a function on~$\Bbb Z$ supported by~${\Bbb Z}_+$.} Then the function
    $$ \exp{\sum_{n\geq 1} \frac{\chi(n)}{n}z^n}$$
of a variable~$z$,  i.e., the exponential of the generating function of these values on cycles,  is a totally positive function in the above sense. Here it is important to keep in mind the multiplicativity property of indecomposable characters (see~\cite{VK}): the values of such a character on an arbitrary conjugacy class is the product of its values on the corresponding cycles, i.e., the values on cycles uniquely determine the character. Thus, the positive definiteness of an indecomposable character (regarded as a function on the group~${\frak S}_{\Bbb N}$) can be restated in terms of its restriction to cycles as the total positivity. It is this observation and the multiplicativity property of indecomposable characters (which we do not need now) that allowed E.~Thoma to use the previous theorem to give for the first time a formula for the characters of~${\frak S}_{\Bbb N}$. Thoma did not know about Edrei's theorem, but used a close theorem by Bieberbach, which, together with his own considerations, gave an equivalent result. Thoma's paper contains no ``representation-theoretic'' interpretation of the parameters and no realization of representations.

Later, a ``representation-theoretic'' proof of the theorem was given in~\cite{VK}, as well as an interpretation of the parameters and a realization of the factor representations. By now, there are already five different proofs (see below), but  still no purely combinatorial proof, which should clarify important properties of general lattices.

In this connection, we pose the following problem, which, apparently, has not been studied. As mentioned above, instead of an entire or meromorphic function, or even a Laurent series, one can consider the sequence of its Taylor (Laurent) coefficients as a function on~$\Bbb Z$, and call this sequence a positive definite function on the group~$\Bbb Z$. This justifies a general definition of a totally positive function on a locally compact, or even arbitrary, group or even semigroup with given linear order. For group $\Bbb Z$, or $\Bbb R$ this is usual linear order; for general countable group this is a numeration of he group  using integers $\Bbb Z$ or $\Bbb N$. For example the semigroup ${\Bbb Z}^2_+$ a numeration is defined by infinite Young tableau which cover
all semigroup.

Namely, in the definition of {\it total positive definiteness} of function on the group or semigroup, we require the nonnegativity of all, and not only principal, minors with given order of variables (elements of the group or semigroup). This means that for any two finite subsets of the elements of the group with the same number of elements,
$\{g_i\}_{i=1,\dots n}, \{h_j\}_{j=1,\dots n}$, with have monotonic numeration with respect to the given linear order we require non-negatively ofthe minor $\|f(g_i\cdot h^{-1}_j)\|_{i,j}$.
Evidently the answer depends on the orders.

There appears a series of questions which are not clear even for $\Bbb Z$ with standard order. For instance, what unitary representations and what vectors in these representations correspond (in the sense of the GNS construction) to totally positive functions? These representations are most likely reducible, and these vectors have some additional properties. For the group~$\Bbb Z$, such a representation is regular, but is  is not known what property distinguishes the vector states corresponding to totally positive matrix elements; this is an interesting question close to the problem of central measures.

\section{Plancherel measure}

The definition of the Plancherel measure for the infinite symmetric group as a measure on the space of infinite Young tableaux, which was given in our paper~\cite{VK-77} (see~\cite{VK-81,VK-85} and also~\cite{Shepp}), became established in the literature, but it is  long  in need of a more complete justification. Moreover, in the context under consideration, the term ``Plancherel measure'' itself can be disputed, since, in a sense, it does not coincide with the canonical understanding of this term. That is why we make several general comments on the considerations we had in mind, on numerous links to concrete studies of traditional Plancherel measures (for example, to the 1960s papers by  Gelfand's school on Plancherel measures for some semisimple groups, which became classical), on the relation to general representation theory. Then we return to our main case of interest, that of locally finite groups and, in particular, the infinite symmetric group. In short, we use this term, first, because the Plancherel measure in our sense is the limit of Plancherel measures for finite subgroups; and, second, because for the limiting group  it can be regarded as the most natural candidate for such a measure in the classical sense; recall that the group is not of  type~I, so the notion itself is not uniquely defined. The corresponding uniqueness theorem should be understood with regard to these considerations.

\subsection{Plancherel measure in the general context}

By a Plancherel measure one usually means a measure on the space of equivalence classes of irreducible representations of a group.

For a {\it commutative locally compact group}, the Plancherel measure is the Haar measure on the group of characters. For a {\it finite group}, the probability measure on the space of equivalence classes of irreducible complex representations whose value at each representation~$\pi$ is proportional to the squared dimension of~$\pi$  is distinguished and should be called the Plancherel measure by analogy with the classical Plancherel measure.

For a {\it nondiscrete compact group}, the Plancherel measure is $\sigma$-finite, and its weights, more exactly, the ratios of the measures of two irreducible reprentations, are still equal to the ratio of their squared dimensions. The duality in this case is well known starting from works by Tanaka and Krein.

In all these cases, there is an analog of Parceval's identity (or the Plan\-che\-rel isometry) for the group~$\Bbb R$, i.e., an isometry between the group algebra
 ${{\Bbb C}[G]\sim l^2[G]}$ with the Hilbert structure determined by the inner product corresponding to the Haar measure (the uniform measure on~$G$) and the space~$C(G)$  of compex-valued functions on~$G$ with the Hilbert structure generated by the Plancherel measure on the space of equivalence classes of representations.

This isometry can be expressed as the following equality of inner products:
$$\langle f_1,f_2\rangle_{\rm Ha} = \langle R(f_1),R(f_2)\rangle_{\rm Pl};$$
here $f_1,f_2 \in {\Bbb C}[G]$ and $R$ is the right regular representation of the algebra on itself,  $\langle\cdot,\cdot\rangle_{\rm Ha}$ is the inner product corresponding to the Haar measure on the group~$G$, and  $\langle\cdot,\cdot\rangle_{\rm Pl}$ is the inner product defined as the normalized direct sum of the inner products in each of the minimal primary subrepresentations, whose spaces are the matrix algebras~$M_k({\Bbb C})$ with the ordinary inner product $\langle A,D\rangle=\operatorname{Tr}\{AB^*\}$.

For type~I groups, the definition of the Plancherel measure reduces to the same formula, which uses the pairing of operator functions corresponding to the representation; such a definition is essentially given in Dixmier's book~\cite{Dix} for type~I groups (in his terminology, GCR groups), which exhausts the problem of defining the Plancherel measure for these groups.

But there is another important circumstance, which was discussed in the old literature. By the decomposition of the regular representation one should mean the {\it central decomposition of the regular representation}. It is uniquely defined: this is the decomposition into factor representations, and for type~I groups they are multiples of irreducible representations  (primary). Hence, it is natural to regard the Plancherel measure as a measure on primary representations, which does not change matters much, since there arise only multiplicities of irreducible representations. Therefore, the uniqueness of the Plancherel measure in this sense is ensured by an agreement about the choice of additional normalizations depending on the relation between multiplicities.

Let us add that for nonamenable groups, representations that are not weakly contained in the regular representation (e.g., complementary series representations for semisimple groups) do not belong to the support of the Plancherel measure and are not related to it in any sense.

Only the central decomposition of a representation, i.e., the decomposition with respect to the center of the von Neumann algebra generated by the image of the group algebra under the representation, is canonical; this is a decomposition into factor representations (and not in irreducible representations). If the von~Neumann algebra generated by a representation is of type~II or~III, then its factor representations are no longer primary, i.e., multiples of irreducibles. The central decomposition is a necessary step in harmonic analysis, but, of course, it does not give complete information on the representation.

The difficulty with the definition of the Plancherel measure for groups not of type~I is due to the fact that if we want to obtain a measure on the decomposition of a representation into irreducible representations, i.e., on the dual object of a locally compact group, then we must have a Borel structure on this object. But such a structure is not available, so we are forced to choose a part of the set of equivalence classes of irreducible representations. This choice, according to the general theory, is performed by choosing a maximal commutative self-adjoint subalgebra in the commutant of the algebra generated by the representation.

But if we take such a maximal commutative subalgebra in the commutant of the algebra generated by a factor representation, then its  decomposition into irreducible representations depends substantially on the choice of this subalgebra. It is known that these decompositions can even be disjoint for two different maximal subalgebras. This question is still poorly studied.

   \subsection{Plancherel measure for locally finite groups}

Recall that the infinite symmetric group is not a group of type~I, but, on the contrary,  is rather a typical group of type~II or~III (or
 NCGR according to Dixmier~\cite{Dix}).

In the framework of asymptotic representation theory, a natural definition of the Plancherel measure is based on considerations quite different from those described above, namely, approximative considerations. Indeed, the analysis of finite groups leads to the conclusion that in the case of an inductive limit of finite groups (yielding a group not of type~I), the natural candidate for the Plancherel group is the inductive limit of the Plancherel measures on the prelimit groups. Moreover, this measure should be defined not on the space of equivalence classes of all irreducible representations, which does not exist in a reasonable sense, but on another object, which will be discussed below.

Let a group $G_{\infty}$ be defined as an increasing limit of finite groups:
   $$G_{\infty}=\mbox{lim-ind}_n G_n.$$
The dual spaces ${\hat G}_n$ (the sets of equivalence classes of irreducible representations of~$G_n$) form an inverse spectrum, whose limit could be regarded as the dual object to the inductive limit of groups:
 $$\hat G= \mbox{lim-proj}_n {\hat G}_n.$$
However, according to the general theory, points of the projective limit are coherent (with respect to the projections) sequences of points of prelimit spaces, i.e., in terms of our constructions, sequences of representations~$\pi_n$ of~$G_n$ such that $\pi_n$ is an irreducible representation of the group~$G_n$ occurring as a component in the representation induced by the representation~$\pi_{n-1}$ of the group~$G_{n-1}$. In terms of symmetric groups, these are tableaux. Thus, we arrive at the conclusion that the Plancherel measure of the limiting group is, possibly, a measure defined on the space of sequences of partitions. The collection of Plancherel measures on the spaces of representations of
$G_n$  can be regarded as a coherent sequence of points of the corresponding simplices, and the limit point  is a point of  the projective limit of simplices, which, in turn, is also a Choquet simplex (see~\cite{VPr}). We will show that the sequence of Plancherel measures does indeed determine a point of the projective limit, which will be called the Plancherel measure of the limiting group~$G_{\infty}$.

 \begin{lemma}
Let $G$ be a finite group and $H\subset G$ be a subgroup of~$G$, let $\hat G$ and~$\hat H$ be the spaces of equivalence classes of irreducible representations of these groups, and let $\mu_G$ and~$\mu_H$ be the Plancherel measures (see the definition above) on~$\hat G$ and~$\hat H$, respectively. Denote by~$\operatorname{Ind}$ and~$\operatorname{Proj}$ the operations of induction and restriction of irreducible representations, and let $\widehat{\operatorname{Ind}}$ and
~$\widehat{\operatorname{Proj}}$ be the corresponding maps on the simplices~$\operatorname{Simp}(G)$ and~$\operatorname{Simp}(H)$ of probability measures:
 $$\widehat{\operatorname{Ind}}:\operatorname{Simp}(H)\rightarrow \operatorname{Simp}(G), \quad \widehat{\operatorname{Proj}}:\operatorname{Simp}(G)\rightarrow \operatorname{Simp}(H). $$
Then $\widehat{\operatorname{Proj}}\,\mu_G=\mu_H$.
 \end{lemma}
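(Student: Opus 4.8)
The plan is to compute both sides explicitly using the known formulas for the Plancherel measure and the classical Frobenius reciprocity between induction and restriction. Recall that the Plancherel measure of a finite group $K$ assigns to an irreducible representation $\rho \in \hat K$ the weight $\mu_K(\rho) = (\dim \rho)^2 / |K|$. First I would fix notation for the restriction map: for a probability measure $\nu$ on $\hat G$ and $\sigma \in \hat H$, the pushforward $\widehat{\operatorname{Proj}}\,\nu$ should be defined by distributing each atom $\nu(\pi)$ over the irreducible constituents of $\operatorname{Proj}_H^G \pi = \pi|_H$ in proportion to their multiplicities weighted by dimension — that is, along the branching graph with the natural transition weights. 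Concretely, the correct (and only consistent) definition is
$$
\bigl(\widehat{\operatorname{Proj}}\,\nu\bigr)(\sigma) \;=\; \sum_{\pi \in \hat G} \nu(\pi)\, \frac{[\pi|_H : \sigma]\,\dim\sigma}{\dim\pi},
$$
since $\sum_{\sigma}[\pi|_H:\sigma]\dim\sigma = \dim\pi$ guarantees this is again a probability measure; this is the map induced on simplices by restriction in the asymptotic-representation-theory sense (cf. the coherence condition in the projective limit of simplices discussed above).

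Next I would substitute $\nu = \mu_G$ into this formula and simplify. Using $\mu_G(\pi) = (\dim\pi)^2/|G|$ we get
$$
\bigl(\widehat{\operatorname{Proj}}\,\mu_G\bigr)(\sigma) \;=\; \frac{\dim\sigma}{|G|} \sum_{\pi \in \hat G} [\pi|_H : \sigma]\,\dim\pi.
$$
Now the key algebraic step is Frobenius reciprocity: $[\pi|_H:\sigma] = [\operatorname{Ind}_H^G\sigma : \pi]$, hence
$$
\sum_{\pi\in\hat G}[\pi|_H:\sigma]\dim\pi \;=\; \sum_{\pi\in\hat G}[\operatorname{Ind}_H^G\sigma:\pi]\dim\pi \;=\; \dim\bigl(\operatorname{Ind}_H^G\sigma\bigr) \;=\; [G:H]\,\dim\sigma.
$$
Plugging this back in gives $\bigl(\widehat{\operatorname{Proj}}\,\mu_G\bigr)(\sigma) = \dfrac{\dim\sigma}{|G|}\cdot[G:H]\dim\sigma = \dfrac{(\dim\sigma)^2}{|H|} = \mu_H(\sigma)$, which is exactly the claim.

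The computation itself is routine once the definitions are unwound, so the genuine content — and the only place that requires care — is pinning down the right definition of $\widehat{\operatorname{Proj}}$ on the simplices. One must check that restriction of representations really does induce the dimension-weighted branching map above (rather than, say, the naive multiplicity-counting map, which would not preserve total mass), and dually that $\widehat{\operatorname{Ind}}$ is the dimension-weighted map going the other way; these two maps are adjoint with respect to the Plancherel inner products, and the lemma is precisely the statement that $\mu_G$ restricts to $\mu_H$, equivalently that the family $\{\mu_{G_n}\}$ is coherent for the projective system $\{\hat G_n\}$. I expect the main obstacle to be purely expository: stating the definition of these simplicial maps cleanly enough that the one-line reciprocity argument becomes transparent; there is no analytic or combinatorial difficulty, only the bookkeeping of multiplicities and dimensions.
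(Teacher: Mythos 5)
Your proof is correct and follows essentially the same route as the paper: identify $\widehat{\operatorname{Proj}}$ with the dimension-weighted branching (cotransition) map and reduce the claim to $\dim\operatorname{Ind}_H^G\sigma=[G:H]\dim\sigma$ via Frobenius reciprocity; the paper simply asserts that this "follows immediately from the properties of induced representations," so you have supplied the details it omits. One remark worth recording: your explicit formula
$\bigl(\widehat{\operatorname{Proj}}\,\mu_G\bigr)(\sigma)=\tfrac{1}{|G|}\sum_{\pi}[\pi|_H:\sigma]\,\dim\sigma\,\dim\pi$
also corrects the paper's displayed intermediate identity, whose middle term $\sum_{\Lambda\supset\lambda}(\dim\Lambda)^2/\operatorname{ord}(G)$ is not literally equal to $(\dim\lambda)^2/\operatorname{ord}(H)$ (test $S_2\subset S_3$ with $\lambda$ trivial: it gives $5/6$ rather than $1/2$); the correct weight is $[\Lambda|_H:\lambda]\,\dim\lambda\,\dim\Lambda$, exactly as in your computation.
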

  \begin{proof}
The equality we need to prove is equivalent to the following one:
  $$\operatorname{Proj}(\mu_G)\{\lambda\}=\sum_{\Lambda\supset \lambda}\frac{\{\dim{\Lambda}\}^2}{\operatorname{ord}(G)} =\frac{\{\dim \lambda\}^2}{\operatorname{ord}(H)}=\mu_H(\lambda),$$
which follows immediately from the properties of induced representations. It seems that this relation has not yet appeared in the literature, but for necessary information on induction, see~\cite{Serr}.
    \end{proof}

 \begin{definition}
 A locally finite graded graph is said to be a Plancherel graph if for every vertex~$\lambda$ of level~$n$, the following relation holds:
$$ \frac{\dim(\lambda)}{\sum_{\Lambda:\Lambda \succ \lambda}\dim(\Lambda)}=\frac{d_n}{d_{n+1}},$$
where $d_n=\sum_{\lambda} \dim{(\lambda)}^2$ is the sum over all vertices of level~$n$, i.e., the ratio depends only on the number of the level and is equal to the ratio of the sums of squared dimensions at the corresponding levels.
 \end{definition}

We have proved the following theorem.

\begin{theorem}[theorem-definition]
The Bratteli diagram of a locally finite group $G=\bigcup_n G_n$ is a Plancherel graph. The Plancherel measures on its levels are coherent and determine (by definition) a measure on the space of paths (tableaux), called the Plancherel measure of~$G$.
\end{theorem}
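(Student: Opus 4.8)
The plan is to translate the graph-theoretic data of the Bratteli diagram into representation-theoretic language and then to read off the Plancherel-graph identity from three classical facts: Frobenius reciprocity, induction in stages, and Burnside's identity $\sum_{\lambda\in\hat G}(\dim\lambda)^2=\operatorname{ord}(G)$. First I would fix the tower $\{1\}=G_0\subset G_1\subset G_2\subset\cdots$ with $G_\infty=\bigcup_n G_n$ and spell out its Bratteli diagram: the vertices of level~$n$ are the classes $\lambda\in\hat G_n$, and the multiplicity $\kappa(\mu,\lambda)$ of the edge joining $\mu\in\hat G_{n-1}$ to $\lambda\in\hat G_n$ is $\dim\operatorname{Hom}_{G_{n-1}}(\mu,\operatorname{Proj}\lambda)$. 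By Frobenius reciprocity this also equals $\dim\operatorname{Hom}_{G_n}(\operatorname{Ind}\mu,\lambda)$, so one and the same diagram is described by restriction and by induction. (Here $\sum_{\Lambda\succ\lambda}$ is understood with edge multiplicities; for the Young graph the branching is multiplicity-free and this is invisible.) Recall that the graph dimension $\dim(\lambda)$ is the number of root-to-$\lambda$ paths counted with edge multiplicities, so that the root has graph dimension $1$ and $\dim(\lambda)=\sum_{\mu\prec\lambda}\kappa(\mu,\lambda)\dim(\mu)$.

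The two identifications I need are: (a)~$\dim(\lambda)=\dim\lambda$, the dimension of $\lambda$ as a $G_n$-module; and (b)~$d_n=\sum_{\lambda\in\hat G_n}(\dim\lambda)^2=\operatorname{ord}(G_n)$. Statement~(b) is Burnside's identity. For~(a) I would induct on~$n$: if the graph dimension equals the representation dimension on level $n-1$, then $\dim(\lambda)=\sum_\mu\kappa(\mu,\lambda)\dim(\mu)$ is the multiplicity of $\lambda$ in $\operatorname{Ind}_{G_{n-1}}^{G_n}\bigl(\bigoplus_\mu(\dim\mu)\,\mu\bigr)=\operatorname{Ind}_{G_{n-1}}^{G_n}{\Bbb C}[G_{n-1}]={\Bbb C}[G_n]$, i.e.\ $\dim\lambda$ (the induced regular representation is the regular representation, by induction in stages). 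Granting (a) and (b), the Plancherel-graph condition drops out:
$$\sum_{\Lambda:\Lambda\succ\lambda}\kappa(\lambda,\Lambda)\dim(\Lambda)=\dim\bigl(\operatorname{Ind}_{G_n}^{G_{n+1}}\lambda\bigr)=[G_{n+1}:G_n]\dim\lambda=\frac{\operatorname{ord}(G_{n+1})}{\operatorname{ord}(G_n)}\dim(\lambda)=\frac{d_{n+1}}{d_n}\dim(\lambda),$$
so that the ratio $\dim(\lambda)/\sum_{\Lambda\succ\lambda}\dim(\Lambda)$ equals $d_n/d_{n+1}$ and does not depend on the vertex~$\lambda$; this is exactly the definition of a Plancherel graph.

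For the second assertion, the measures $\mu_{G_n}(\lambda)=(\dim\lambda)^2/\operatorname{ord}(G_n)=\dim(\lambda)^2/d_n$ on the levels are coherent with respect to the cotransition probabilities of the diagram: this is precisely the Lemma above, $\widehat{\operatorname{Proj}}\,\mu_{G_{n+1}}=\mu_{G_n}$, applied to each inclusion $G_n\subset G_{n+1}$ --- and, read backwards, it is the same statement as the Plancherel-graph identity just verified. Hence $(\mu_{G_n})_n$ is a point of the projective limit $\mbox{lim-proj}_n\operatorname{Simp}(\hat G_n)$, and by the standard correspondence between coherent families of measures on the levels of a Bratteli diagram and central measures on its path space (see~\cite{VPr}) it determines a unique central probability measure on the space of infinite paths, i.e.\ the Plancherel measure of~$G_\infty$.

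I do not expect a serious obstacle here: the argument is a matter of matching two languages. The points that need genuine care rather than routine verification are three: keeping the edge multiplicities of the Bratteli diagram consistent under the passage between induction and restriction (Frobenius reciprocity); the identification $\dim(\lambda)=\dim\lambda$ of the graph dimension with the representation dimension, which is what makes the combinatorial weight $\dim(\lambda)^2/d_n$ coincide with the representation-theoretic weight $(\dim\lambda)^2/\operatorname{ord}(G_n)$; and the appeal, for the passage from a coherent family of level measures to an actual measure on the path space, to the general theory of projective limits of Choquet simplices rather than an ad hoc Kolmogorov-type extension.
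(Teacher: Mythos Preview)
Your proposal is correct and follows essentially the same route as the paper. The paper treats the theorem as an immediate consequence of Lemma~1 (coherence of $\mu_{G_n}$ under $\widehat{\operatorname{Proj}}$), merely stating ``We have proved the following theorem''; the underlying identity in both cases is $\sum_{\Lambda\succ\lambda}\kappa(\lambda,\Lambda)\dim\Lambda=[G_{n+1}:G_n]\dim\lambda$, deduced from the dimension of an induced representation. You make explicit two ingredients the paper leaves tacit---the identification of the graph dimension $\dim(\lambda)$ with the module dimension $\dim\lambda$ (your inductive step via $\operatorname{Ind}\,{\Bbb C}[G_{n-1}]={\Bbb C}[G_n]$) and Burnside's identity $d_n=\operatorname{ord}(G_n)$---and you are more careful about edge multiplicities, but the logical skeleton is the same.
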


In view of the importance of this notion, we will again discuss it in detail for the symmetric group. The fact that the limiting measure is a measure on sequences of representations (tableaux) poses new questions about this Plancherel measure. For example, a remarkable fact is that it is a Markov measure, like all central measures. We will present the transition and cotransition measures of the corresponding Markov chain. Apparently, there is a necessity to make a list of various properties of graded branching graphs of general locally finite groups, which so far have been studied only for some special groups.

It is of interest to give examples of Plancherel Bratteli graphs different from those arising from groups and to understand to what extent the Plancherel property  is close to being a characterization of Bratteli diagrams of groups. Another question is what Bratteli diagrams are simultaneously Plancherel graphs and differential posets in the sense of Stanley and Fomin~\cite{St,Fo}.

It is easy to check that for commutative locally finite groups,  the new definition of the Plancherel measure coincides with the standard one: it is the Haar measure on the compact group of characters.

So, we associate a Plancherel measure with a locally finite group. One can easily check that this measure does not change if we pass from an inductive sequence of groups~$\{G_n\}$ to a subsequence~$\{G_{n_k}\}$. Thus, the measure is determined by the inductive limit itself. But, of course, an essential modification of the system of embeddings of finite groups in the definition of the inductive limit may change the measure.

Here we have a vast area for various group-theoretic and representation-theoretic questions.

Note that an attempt to define a reasonable probability measure on the set of irreducible representations, e.g., on the set of infinite Young diagrams, is hardly meaningful; the reason is not only in the absence of a separable Borel structure on this set for groups not of type~I, but also in the fact that the notion of a class of measures that ``decompose'' representations of this type itself has little sense.

A convenient tool for the study of asymptotic properties of the Plancherel measure is the ``grand ensemble,'' i.e., the union of all sets of finite diagrams, on which one can define the ``Poissonization'' of the Plancherel measures of finite groups, see~\cite{V96,BOO}.

The above considerations can be applied to a more general case, that of semidirect products of commutative $C^*$-algebras and their automorphism groups. Recall that, in particular, the group  $C^*$-algebra of a locally finite group, being a so-called AF-algebra, is the semidirect product of a commutative (``Gelfand--Tsetlin'') subalgebra  and the group algebra of some (``adic'') group. In this case, the Plancherel measure is defined in a natural way as the invariant measure on the spectrum of this commutative subalgebra, and thus as the spectral measure of the regular representation. Here we can say again that such a definition depends on the realization of the algebra as a semidirect product.

The study of the Plancherel measure for the infinite symmetric group shows to what extent this notion is useful in the study of groups and related combinatorial and other objects.

Below we give an independent, purely combinatorial, definition of the Plancherel measure for the infinite symmetric group. But first we present the main formulas for this measure.

\subsection{The Plancherel measure for the infinite symmetric group}

Recall that the branching of representations of a locally finite group allows one to construct a graph (in the general case, a multigraph) which is a graded graph (Bratteli diagram); a measure on the path space of such a graph is defined in a natural way as the inverse limit of  a coherent system of measures on initial segments of paths. From the probabilistic point of view, we originally have only measures on levels, i.e., know the one-dimensional distributions of the future Markov process, and we must check that there exists a system of transition probabilities that determines a central measure with these one-dimensional distributions (the centrality of a measure is equivalent to its invariance under inner automorphisms).

Consider the infinite Young graph~$\textbf{Y}$, i.e., the graded graph of all finite Young diagrams~$\lambda$, or finite ideals of the lattice~${\Bbb Z}_+^2$. A Young tableau $t=\{t_k\}_{k=1}^n$ with $n$ cells is a monotone sequence of $n$ diagrams, or a path in the Young graph that starts from the one-cell diagram. The space~ ${\cal T}(\textbf{Y})$ of all infinite paths in the Young graph, i.e., the space of infinite Young tableaux, is the inverse limit of the sequence of spaces of finite Young tableaux with respect to the natural projections $P_{n+1}:{\cal T}_{n+1}\rightarrow {\cal T}_n$, $n=1,2\dots$, forgetting the last cell:
$$
{\cal T}(\textbf{Y})=\lim_n ({\cal T}_n, P_n).
$$
This is a Cantor-like compact set equipped with a Borel structure and a tail filtration~$\{\xi_n\}_n$. By definition, two tableaux $t \in {\cal T}_n$ and $t' \in {\cal T}_n$ lie in the same element (class) of the partition~$\xi_n$ if their diagrams coincide for $k>n$, i.e., $t_k=t'_k$ for $k=n+1,n+2, \dots $.

The Plancherel measure on the $n$th level $\textbf{Y}_n$, $n=1,2 \dots$, of the graph~$\textbf{Y}$ is the probability measure~$\mu_n$ defined by the formula
\begin{equation}\label{pl}
    \mu(\lambda)=\frac{\dim(\lambda)^2}{n!}.
\end{equation}
In this and subsequent formulas, $\dim (\lambda)$ is the dimension of the representation~$\pi_{\lambda}$ of the group~$S_n$  canonically associated with the diagram~${\lambda}$; its combinatorial interpretation is as follows: this is the number of paths leading to the diagram~$\lambda$ from the initial vertex~$\emptyset$. This dimension can be found by the Frobenius formula, or the well-known hook-length formula.

By definition, the Plancherel measure~$\mu^n$ on the set~${\cal T}_n$ of all Young tableaux with $n$ cells is a measure whose restrictions to the sets of tableaux with the same diagram~$\lambda$ are uniform, hence it is given by the formula
 $$
 \mu^n(t)=\frac{\dim(\lambda)}{n!},\qquad t\in {\cal T}_n
 $$
   (recall that the number of tableaux with given diagram~$\lambda$ is equal to~$\dim(\lambda)$).

   \begin{lemma}
The collection of measures $\{\mu^n\}_{n=1}^{\infty}$ on the sets of finite tableaux is coherent with respect to the projections~$P_n$; in other words,
    $$\sum_{t' \in {\cal T}_{n+1}: P_{n+1}(t')=t} \mu^{n+1}(t')=\mu^n(t).$$
    \end{lemma}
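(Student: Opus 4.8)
The plan is to reduce the coherence relation to a single well-known identity for dimensions in the Young graph, everything else being a straightforward unwinding of the definitions of~$P_{n+1}$ and of~$\mu^n$. First I would fix~$n$ and a tableau $t\in{\cal T}_n$, and denote by~$\lambda$ its diagram at level~$n$ (so $\lambda$ has $n$ cells). Since $P_{n+1}\colon{\cal T}_{n+1}\to{\cal T}_n$ forgets the last cell, a tableau $t'\in{\cal T}_{n+1}$ satisfies $P_{n+1}(t')=t$ precisely when its first $n$ diagrams are those of~$t$ and its diagram at level~$n+1$ is some~$\Lambda$ obtained from~$\lambda$ by adding one cell so as to keep a valid Young diagram, i.e.\ some~$\Lambda$ with $\Lambda\succ\lambda$ in~$\textbf{Y}$. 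For each such~$\Lambda$ there is exactly one such extension~$t'$, so the fiber $P_{n+1}^{-1}(t)$ is in bijection with $\{\Lambda:\Lambda\succ\lambda\}$.

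Substituting the defining value $\mu^{n+1}(t')=\dim(\Lambda)/(n+1)!$, where $\Lambda$ is the diagram of~$t'$ at the top level, the left-hand side of the asserted equality becomes
$$\sum_{t'\colon P_{n+1}(t')=t}\mu^{n+1}(t')=\sum_{\Lambda\succ\lambda}\frac{\dim(\Lambda)}{(n+1)!}.$$
Comparing with $\mu^n(t)=\dim(\lambda)/n!$, the Lemma is therefore equivalent to the ``up'' recursion
$$\sum_{\Lambda\succ\lambda}\dim(\Lambda)=(n+1)\,\dim(\lambda)\qquad(\lambda\vdash n).$$

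Proving this recursion is the only substantive step, and where I expect the real work to be. I would argue representation-theoretically: by the branching rule for symmetric groups, $\operatorname{Ind}_{S_n}^{S_{n+1}}\pi_\lambda\cong\bigoplus_{\Lambda\succ\lambda}\pi_\Lambda$ with all multiplicities equal to~$1$; since $\dim\operatorname{Ind}_{S_n}^{S_{n+1}}\pi_\lambda=[S_{n+1}:S_n]\dim\pi_\lambda=(n+1)\dim(\lambda)$, equating dimensions on the two sides of this isomorphism yields $\sum_{\Lambda\succ\lambda}\dim(\Lambda)=(n+1)\dim(\lambda)$. This is precisely the identity already hidden in the earlier Lemma on $\widehat{\operatorname{Proj}}\,\mu_G=\mu_H$ with $G=S_{n+1}$, $H=S_n$, so it may equally be cited from there; a purely combinatorial derivation is also available, e.g.\ via the down operator on $\Bbb Q\textbf{Y}$ and the relation $DU-UD=\mathrm{Id}$, but is not needed. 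Chaining the three displays gives the coherence relation. As a consistency check one can sum over $t\in{\cal T}_n$ and invoke $\sum_{\lambda\vdash n}\dim(\lambda)^2=n!$ to see that each $\mu^n$ is indeed a probability measure, compatibly with coherence.
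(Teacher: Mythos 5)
Your proposal is correct and follows essentially the same route as the paper: both reduce the coherence relation to the identity $\sum_{\Lambda\succ\lambda}\dim(\Lambda)=(n+1)\dim(\lambda)$ and derive it from the dimension count for $\operatorname{Ind}_{S_n}^{S_{n+1}}\pi_\lambda$. Your unwinding of the fiber of $P_{n+1}$ is just a more explicit version of the paper's one-line substitution.
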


    \begin{proof}
Upon substituting the values~$\mu^n(t)$ from the previous formula, the desired relation becomes equivalent to the following relation for dimensions of diagrams:
  \begin{equation}\label{ind}
    \sum_{\Lambda:\Lambda\gtrdot \lambda} \dim(\Lambda)=(n+1)\dim(\lambda),
\end{equation}
where $\Lambda$ and $\lambda$ lie at the $(n+1)$th and $n$th level of the Young graph, respectively, and differ by one cell.

The last formula immediately follows from the representation-theoretic meaning of the Young graph as the graph of induced representation. Namely, counting the dimension of such a representation, we have
\begin{equation}\label{ind}
 \sum_{\Lambda\gtrdot \lambda}\pi_{\Lambda}= \operatorname{Ind}_{S_n}^{S_{n+1}}\pi_{\lambda}.
\end{equation}
\end{proof}

   \begin{corollary}
The family of cylinder measures ${\mu^n}$ on the space~{\cal T} of infinite tableaux determines a well-defined Borel probability measure $\mu^{\infty}\equiv {\rm Pl}$, called the Plancherel measure. It is central and nondegenerate.
   \end{corollary}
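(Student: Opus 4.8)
\medskip
\noindent\textbf{Proof proposal.} The statement has three parts, which I would treat in order of increasing depth. \emph{Existence of $\mu^\infty$}: the space $\mathcal{T}(\mathbf Y)=\lim_n(\mathcal T_n,P_n)$ is a projective limit of finite sets, hence a compact metrizable (Cantor-like) space whose Borel $\sigma$-algebra is generated by the clopen cylinders $[t]$ attached to finite tableaux $t\in\mathcal T_n$. Setting $\mu^\infty([t]):=\mu^n(t)$ is legitimate precisely because the preceding Lemma says the family $\{\mu^n\}$ is coherent under the projections $P_n$; this makes $\mu^\infty$ a finitely additive set function of total mass $1$ on the algebra of cylinders, and since cylinders are compact, continuity at $\varnothing$ is automatic (a decreasing sequence of nonempty clopen sets in a compact space has nonempty intersection), so Carath\'eodory's theorem (equivalently, the Kolmogorov--Bochner extension theorem for projective limits of probability spaces) produces a unique Borel probability extension $\mu^\infty\equiv\mathrm{Pl}$ with $P_n$-marginals $\mu^n$. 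This step is purely formal.

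\emph{Centrality.} Here I would invoke the standard criterion that a Borel probability measure on the path space of a graded graph is central iff the measure of a cylinder $[t]$ depends only on the endpoint and the level of the finite path $t$ --- equivalently, iff the conditional law of the first $n$ steps given the vertex reached at level $n$ is uniform over the paths to that vertex; by the remark recalled earlier in the paper this is in turn equivalent to invariance under the inner automorphisms of $\mathfrak S_\infty=\bigcup_n S_n$. But by construction $\mu^\infty([t])=\mu^n(t)=\dim(\lambda)/n!$ depends on $t$ only through its shape $\lambda$ and its length $n$, so $\mu^\infty$ is central (this is also an instance of the theorem-definition above applied to the Bratteli diagram of $\mathfrak S_\infty$). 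Again formal.

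\emph{Nondegeneracy} is the one substantive point, and I expect it to be the main obstacle. It is to be read in the sense fixed in the introduction: with $\mathrm{Pl}$-probability one the infinite tableau $t=(t_1\subset t_2\subset\cdots)$, viewed as a monotone numbering of the lattice, exhausts $\mathbb Z_+^2$, i.e. $\bigcup_n t_n=\mathbb Z_+^2$. Since $\mathbb Z_+^2$ is countable and the shapes increase, it suffices to show $\mathrm{Pl}(\exists n:(i,j)\in t_n)=1$ for each fixed cell $(i,j)$, and since $\{(i,j)\in t_n\}$ increases in $n$ this reduces by monotone convergence to
\[
\mu_n\bigl(\{\lambda:\lambda_i\ge j\}\bigr)\ \longrightarrow\ 1\qquad(n\to\infty)
\]
for every fixed $i,j$. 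I would obtain this from the limit-shape theorem for the Plancherel measure (Logan--Shepp; Vershik--Kerov \cite{VK-77}): after rescaling a $\mu_n$-random diagram by $1/\sqrt n$, its boundary converges in probability, in Hausdorff distance, to the fixed Vershik--Kerov curve; the region bounded by that curve contains a square $[0,\varepsilon]^2$ about the corner of the quadrant, and for $n$ large the rescaled cell $(i/\sqrt n,\,j/\sqrt n)$ lies well inside it, so the probability that the random diagram contains $(i,j)$ tends to $1$. A softer alternative bypasses the limit shape: $\mathrm{Pl}$ is ergodic (it is an extreme point of the simplex of central measures), so the tail-measurable event $\{\exists n:(i,j)\in t_n\}$ has probability $0$ or $1$, and it is not $0$ since every cylinder has positive mass because $\dim(\lambda)>0$ for all $\lambda$. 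Either way, the crux is that, unlike the first two parts, nondegeneracy needs genuine asymptotic input about $\mathrm{Pl}$ --- the limit-shape theorem, or equivalently its extremality, whose full justification is part of the classification of ergodic central measures on the Young graph discussed later in the paper.
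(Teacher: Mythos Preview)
Your treatment of existence and centrality matches the paper exactly (Kolmogorov extension from the coherence lemma; centrality from the fact that $\mu^n(t)=\dim(\lambda)/n!$ depends only on the shape).

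For nondegeneracy, however, you are proving the wrong statement. In this paper (see Section~4), a random numbering is \emph{nondegenerate} if every proper ideal $I\subsetneq\mathbb Z_+^2$ has \emph{zero density} almost surely, i.e.\ $\frac1n\,|t_n\cap I|\to 0$. This is strictly stronger than the exhaustion property $\bigcup_n t_n=\mathbb Z_+^2$ that you establish (exhaustion is a consequence: if $\bigcup_n t_n$ were proper it would be an ideal of density~$1$). The paper's one-line proof is: any proper ideal omits some cell $(a,b)$, hence is contained in $\{i<a\}\cup\{j<b\}$, a finite union of rows and columns; so it suffices that each individual row and column has density~$0$, i.e.\ $\lambda_k^{(n)}/n\to 0$ and $(\lambda')_k^{(n)}/n\to 0$ a.s. Your limit-shape argument is precisely the right input for \emph{this} reduction---the $O(\sqrt n)$ growth of the first row/column gives it immediately---so you should simply redirect it.

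Your ``softer alternative'' via ergodicity does not survive the correction. Even granting that $\mathrm{Pl}$ is ergodic (which in this paper is part of the deep Theorem~3, not something available at this point), the $0$--$1$ law only tells you the event $\{\lambda_1^{(n)}/n\to 0\}$ has probability $0$ or~$1$; ruling out~$0$ is not achieved by ``every cylinder has positive mass,'' since that observation gives no control on asymptotic densities. You still need the sublinear-row input. So the nondegeneracy step genuinely requires the Vershik--Kerov/Logan--Shepp asymptotics (or the weaker $\lambda_1^{(n)}=o(n)$), exactly as you suspected, but applied to the density formulation rather than to exhaustion.
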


    \begin{proof}
The existence of this measure follows from the coherence of the cylinder measures with respect to the projections (Kolmogorov's theorem). The centrality is equivalent to the uniformness of the measures on the sets of tableaux with given diagram. The nondegeneracy can be verified directly, since all proper infinite ideals of the lattice~${\Bbb Z}_+^2$ are unions of finitely many rows or columns of the lattice, and the density of every such ideal is equal to zero.
    \end{proof}

This fundamental measure was introduced and named ``Plancherel measure'' in~\cite{VK-77}. The behavior of typical diagrams with respect to the Plancherel measure~$\mu_n$ as $n\to\infty$ is very well studied since the late 1970s; namely, we know the limit shape of diagrams, the asymptotics of the lengths of rows, their fluctuations, and many other things.

For further work with the Plancherel measure, it is convenient to use the language of the theory of Markov chains. Note that it is natural to regard the space~$\cal T$ of Young tableaux as a nonstationary Markov compactum. By construction, ${\rm Pl}$ is a Markov measure (a measure corresponding to a Markov chain) on this space; the state space of this Markov chain at time~$n$ is~${\cal T}_n$, its  \textbf{transition probabilities} are equal to
\begin{equation}
     \operatorname{Prob}\{t_{n+1}=\Lambda|t_n=\lambda\}=
     \frac{\dim (\Lambda)}{(n+1)\dim (\lambda)},
\end{equation}
and its \textbf{cotransition probabilities} (which express the centrality of~${\rm Pl}$) are equal to
\begin{equation}
    \operatorname{Prob}\{t_n=\lambda|t_{n+1}\}=\Lambda\}=\frac{\dim(\lambda)}
     {\dim(\Lambda)}.
\end{equation}

This remarkable Markov chain was already studied in~\cite{VK,Ker}; its new properties (in particular, quasi-stationarity) will be considered later. Here we do not discuss an interesting relation between the Plancherel measure and its generalizations and the notion of a model of representations in the sense of I.~M.~Gelfand~\cite{Gel}. For finite symmetric groups, a model was suggested in~\cite{Klya}. For the infinite symmetric group, this is the representation in the $L^2$ space over the Plancherel measure; the latter plays the role of the ``Koopman representation'' corresponding to the regular representation regarded as a von~Neumann representation. However, we call it the basic representation and not the model, because there can be other candidates for a model. In more detail we consider this question in a paper under preparation.

     \section{Random monotone numberings and the combinatorial meaning of the Plancherel measure}

There is another useful and important interpretation of tableaux and measures on the space of tableaux, in particular, the Plancherel measure, in terms of  random numberings of partially ordered sets (posets). First, we will give general definitions for arbitrary locally finite posets~$P$ with minimal element  $\{\emptyset\} \in P$, and then consider our main example of the lattice~$\Bbb Z_+^2$ with the natural partial order.

A monotone numbering of a poset~$P$ (the main example is $P={\Bbb Z}_+^2$) is a map~$\phi:{\Bbb N}\to P$ from the set of positive integers to~$P$ satisfying the conditions
$$\phi(0)=\emptyset,\qquad \phi(n)\succcurlyeq \phi(m) \Rightarrow  n>m.$$

If $\phi$ is a monotone numbering, then for every~$n$ the set $\{\phi(1),\phi(2),\dots, \phi(n)\}$ is a finite ideal in~$P$ (a Young diagram for the lattice~$\Bbb Z_+^2$).

The graph (Hasse diagram) of all finite ideals of~$P$ ordered by inclusion will be denoted by~$\Gamma(P)$ (for the lattice~$\Bbb Z_+^2$, this is the Young graph). A monotone numbering is an infinite connected path (or tableau) in the graph~$\Gamma(P)$.

The density of an infinite set $I \subset P$ with respect to a numbering ${\phi:{\Bbb N}\rightarrow P}$ is the limit
$$
\lim_n\frac{1}{n}\left|\{i: i<n; \phi(i)\in I\}\right|.
$$

The set of all numberings is a Cantor-like compact set~$\cal N$. A random numbering is, by definition, a Borel probability measure on~$\cal N$. It is determined by a sequence of random elements of~$P$, i.e., a random process $\{\xi_n\}_{n \in \Bbb N}$ with values in~$P$.

A random numbering is said to be {\it nondegenerate} if every proper (possibly, infinite) ideal $I \ne P$ has zero density with probability~1.

A random numbering  with values in~$P$ is said to be a {\it Markov} numbering if the corresponding random process is, in a natural sense, a (nonhomogeneous) Markov chain whose state space at time~$n$ is the set of ideals (diagrams in the case of~${\Bbb Z}^2_+$) with $n$ elements.

A random numbering $\phi$ is said to be {\it central} if for every~$n$ the conditional measure on the set of finite ideals with $n$ elements given that the values $\phi(k)$ with $k>n$ are fixed is uniform with probability~$1$.

A random central numbering is said to be {\it ergodic} if the action of the group of all finite renumberings of the random path (which is well defined by centrality) on the path space equipped with this measure is ergodic.

One can easily see that a central numbering, regarded as a measure on the path space of the Young graph, is exactly a central measure. To describe all (ergodic) central measures on a path space is one of the main problems in the whole theory of graded graphs.

The following theorem is one of the deepest facts of the whole asymptotic representation theory of symmetric groups; this is the second uniqueness theorem in our original list.

\begin{theorem}
There is a unique nondegenerate central random numbering of the lattice~$\Bbb Z_+^2$; the corresponding measure on the space of numberings (paths, or tableaux) is the Plancherel measure. It is Markov and ergodic. Another reformulation: the system of cotransition probabilities of the Plancherel measure determines a unique nondegenerate Markov measure on the space of tableaux.
\end{theorem}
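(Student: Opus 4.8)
The plan is to deduce the statement from the known description of the extreme (ergodic) central measures on the Young graph~$\textbf{Y}$, and then to single out the Plancherel measure by the density requirement. First I would recall that the central measures on the path space~${\cal T}(\textbf{Y})$ form a Choquet simplex whose extreme points -- the ergodic central measures, i.e.\ the points of the \emph{boundary} of~$\textbf{Y}$ -- are, by Thoma's theorem (whose analytic face is Theorem~1 above) together with the Vershik--Kerov ergodic method, in one-to-one correspondence with the Thoma simplex
$$
\Omega=\Bigl\{(\alpha,\beta):\ \alpha_1\ge\alpha_2\ge\cdots\ge0,\ \beta_1\ge\beta_2\ge\cdots\ge0,\ \textstyle\sum_i\alpha_i+\sum_j\beta_j\le1\Bigr\};
$$
thus every central measure~$\nu$ on~${\cal T}(\textbf{Y})$ admits a unique integral decomposition $\nu=\int_{\Omega}\nu_\omega\,dM(\omega)$ over~$\Omega$. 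I would also note that every central measure is automatically a (nonhomogeneous) Markov measure, since centrality makes, for each~$n$, the past $(t_1,\dots,t_{n-1})$ and the future $(t_{n+1},t_{n+2},\dots)$ conditionally independent given $t_n$, with the cotransition kernel $\operatorname{Prob}\{t_n=\lambda\mid t_{n+1}=\Lambda\}=\dim(\lambda)/\dim(\Lambda)$ given above common to all central measures; so the ``another reformulation'' in the statement is merely a restatement of the first part.

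The key input is the Vershik--Kerov ergodic method in the form: for the ergodic central measure~$\nu_\omega$ with parameter $\omega=(\alpha,\beta)$ one has, $\nu_\omega$-almost surely,
$$
\lim_{n\to\infty}\frac{\lambda_i(t_n)}{n}=\alpha_i,\qquad \lim_{n\to\infty}\frac{\lambda'_j(t_n)}{n}=\beta_j,
$$
where $\lambda(t_n)$ is the diagram at level~$n$ and $\lambda'$ its transpose. Granting this, I would read off the density under~$\nu_\omega$ of the proper infinite ideal $R_k\subset{\Bbb Z}_+^2$ consisting of the first~$k$ rows: it equals $\alpha_1+\cdots+\alpha_k$ almost surely, and likewise the first~$k$ columns have density $\beta_1+\cdots+\beta_k$; since, as recalled in the Corollary above, every proper infinite ideal is contained in a union of finitely many rows and columns together with a finite set, all of its densities are controlled by the $\alpha_i$ and $\beta_j$. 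Hence $\nu_\omega$ is nondegenerate if and only if $\alpha=\beta=0$. The zero parameter names a single ergodic central measure, which is the Plancherel measure~${\rm Pl}$, because under~${\rm Pl}$ the first row and the first column of $t_n$ both have length $\sim 2\sqrt n=o(n)$ and, again by the Corollary, ${\rm Pl}$ is nondegenerate. So among ergodic central measures ${\rm Pl}$ is the only nondegenerate one; in particular it is ergodic, and it is Markov by the previous paragraph.

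Finally I would remove the ergodicity assumption. For a fixed infinite ideal~$I$ the density $d(I)$ is a tail-measurable functional of the path, hence constant along each ergodic component and $\nu_\omega$-a.s.\ equal to the value $d_\omega(I)$ found above. Therefore, if $\nu=\int_\Omega\nu_\omega\,dM(\omega)$ is nondegenerate, then $d_\omega(I)=0$ for $M$-a.e.\ $\omega$ and every proper infinite~$I$; taking $I=R_k$ and its column counterparts for all~$k$ forces $\alpha_i(\omega)=\beta_j(\omega)=0$ for all $i,j$ and $M$-a.e.\ $\omega$, so $M$ is the point mass at the zero parameter and $\nu={\rm Pl}$. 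This yields uniqueness, while the Markov and ergodicity assertions have been recorded along the way. I expect the main obstacle to be precisely the ergodic-method step of the second paragraph -- the almost sure convergence of the normalized row and column lengths to the Thoma parameters, together with the description of the boundary of~$\textbf{Y}$ -- which carries essentially all the depth of the theorem; a proof bypassing it would have to show directly that nondegeneracy, i.e.\ vanishing density of every proper ideal, forces the coherent system of level distributions to be $\mu_n(\lambda)=\dim(\lambda)^2/n!$, and no such purely combinatorial argument is presently known.
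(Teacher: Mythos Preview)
The paper does not supply its own proof of this theorem. Immediately after the statement it says that ``this uniqueness theorem is the most difficult part of the general Thoma theorem'' and lists references \cite{VK,O1,OO1,BG,P} for existing proofs; the introduction likewise announces that proofs of the three uniqueness theorems are deliberately omitted. The only additional content the paper offers is Lemma~3, a combinatorial reformulation (uniform closeness of the cotransition distribution $d_k^{\lambda}$ to ${\rm Pl}_k$ for diagrams in a square of side $\delta n$) which it says is equivalent to the theorem, with a reference to \cite{P} for its proof via symmetric functions.

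Your outline is correct and is precisely the standard deduction from the Vershik--Kerov identification of the Martin boundary of $\textbf{Y}$ with the Thoma simplex together with the almost sure convergence $\lambda_i(t_n)/n\to\alpha_i$, $\lambda'_j(t_n)/n\to\beta_j$. The reduction of the non-ergodic case via the Choquet decomposition and tail-measurability of densities is also correct, as is the observation that centrality alone forces the Markov property with the stated cotransition kernel. Your treatment of proper infinite ideals matches the paper's remark in the Corollary that any such ideal is contained in finitely many rows and columns.

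The only point worth flagging is one of framing rather than correctness: the paper's whole thesis is that Theorem~1 (Edrei), the present theorem, and Theorem~4 are the \emph{same} theorem in different languages, and that the hard kernel common to all of them is exactly the step you import as a black box---the completeness of the Thoma list, equivalently the fact that the parameter point $\alpha=\beta=0$ carries a \emph{single} ergodic central measure. So from the paper's standpoint your argument correctly locates the difficulty but does not discharge it; this is consistent with your own closing caveat. A self-contained proof along the lines the paper advocates would instead establish Lemma~3 directly, which is what \cite{P} does.
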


This uniqueness theorem is the most difficult part of the general Thoma theorem~\cite{T} on the characters of the infinite symmetric group, which we discussed in the first section of the paper. By now, there are several proofs of this theorem, but still no purely combinatorial proof. Here are the corresponding references:  \cite{VK,O1,OO1,BG,P}.

The proof closest to a purely combinatorial one is given in~\cite{P}; it uses the following interpretation of the problem suggested by me: it suffices to prove the following combinatorial-probabilistic lemma, which is so far a corollary of the theorem, but is in fact equivalent to the theorem.

\begin{lemma}
For every $k\in {\Bbb N}$ and every $\epsilon>0$, there exists $\delta>0$ and $N\in {\Bbb N}$ such that for every
$n>N$ and for every Young diagram $\lambda \vdash n$ with $\lambda \subset[1,\delta\cdot n]\times[1,\delta\cdot n]$,
$$\rho_k(d_k^{\lambda}(\cdot), {\rm Pl}_k(\cdot))< \epsilon;$$
here $\rho_k$ is an arbitrary metric on the space of probability measures on the set of Young tableaux with $k$ cells, while $d_k^{\lambda}(\cdot)$ and ${\rm Pl}_k(\cdot)$ are the probability distributions on these sets of Young tableaux induced by the Young diagram~$\lambda$ and the Plancherel measure~${\rm Pl}$, respectively.
\end{lemma}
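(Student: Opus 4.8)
The plan is to reduce the lemma, using the classical theory of symmetric functions, to a single uniform estimate for the normalized irreducible characters of the symmetric groups ${\frak S}_n$ on a fixed conjugacy class, and then to derive that estimate from the elementary fact that a ``balanced'' Young diagram has all cell contents of small absolute value.

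\emph{Step 1 (reduction to shape marginals).} First I would note that both $d_k^{\lambda}$ and ${\rm Pl}_k$ are central: they assign equal mass to all standard tableaux of a fixed shape $\nu\vdash k$. For ${\rm Pl}_k$ this is the definition, ${\rm Pl}_k(t)=\dim(\nu)/k!$; for $d_k^{\lambda}$ it holds because, in a uniformly random standard tableau of shape $\lambda$, the sub-tableau filled by $1,\dots,k$ is, conditionally on its shape $\nu$, uniform among the $\dim(\nu)$ tableaux of shape $\nu$ (each standard filling of the skew shape $\lambda/\nu$ can be appended to each of them, bijectively). Hence both measures are determined by their push-forwards to the finite set $\textbf{Y}_k$ of diagrams with $k$ cells, and since every metric $\rho_k$ on probability measures on a finite set metrizes pointwise convergence, it suffices to prove that
$$p_{\lambda}^{k}(\nu):=\frac{\dim(\nu)\,\dim(\lambda/\nu)}{\dim(\lambda)}\ \longrightarrow\ \mu_k(\nu):=\frac{\dim(\nu)^2}{k!}$$
uniformly over all $\lambda\vdash n$ with $\lambda\subset[1,\delta n]\times[1,\delta n]$, as $n\to\infty$ and $\delta\to0$; here $\dim(\lambda/\nu)$ is the number of standard skew tableaux of shape $\lambda/\nu$, and the displayed formula for $p_{\lambda}^{k}(\nu)$ is merely the count of standard tableaux of shape $\lambda$ whose first $k$ cells occupy the shape $\nu$, divided by $\dim(\lambda)$.

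\emph{Step 2 (expansion in normalized characters).} Next I would expand $\dim(\lambda/\nu)/\dim(\lambda)$ over the character table. Writing $\widehat{\chi}^{\,\lambda}(\rho):=\chi^{\lambda}(\rho\cup1^{\,n-k})/\dim(\lambda)$ for $\rho\vdash k$ and $z_\rho$ for the order of the centralizer of a permutation of cycle type $\rho$, the standard identities $s_\nu=\sum_{\rho\vdash k}z_\rho^{-1}\chi^{\nu}(\rho)\,p_\rho$, $\dim(\lambda/\nu)=\langle s_\lambda,\,s_\nu p_1^{\,n-k}\rangle$ and $\langle s_\lambda,\,p_\rho p_1^{\,n-k}\rangle=\chi^{\lambda}(\rho\cup1^{\,n-k})$ give
$$\frac{\dim(\lambda/\nu)}{\dim(\lambda)}=\sum_{\rho\vdash k}\frac{\chi^{\nu}(\rho)}{z_\rho}\,\widehat{\chi}^{\,\lambda}(\rho),\qquad\text{whereas}\qquad\frac{\dim(\nu)}{k!}=\sum_{\rho\vdash k}\frac{\chi^{\nu}(\rho)}{z_\rho}\,\delta_{\rho,1^k}$$
(the second equality because $\chi^{\nu}(1^k)=\dim(\nu)$ and $z_{1^k}=k!$). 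Since $\widehat{\chi}^{\,\lambda}(1^k)=1$, subtracting yields $|p_{\lambda}^{k}(\nu)-\mu_k(\nu)|\le C_k\max_{\rho\vdash k,\ \rho\ne1^k}|\widehat{\chi}^{\,\lambda}(\rho)|$, with $C_k:=\max_{\nu\vdash k}\dim(\nu)\sum_{\rho\ne1^k}|\chi^{\nu}(\rho)|/z_\rho$ depending only on $k$. Thus the whole lemma is reduced to showing that, for each fixed $\rho\vdash k$ with $\rho\ne1^k$, one has $\sup\{\,|\widehat{\chi}^{\,\lambda}(\rho)|:\lambda\vdash n,\ \lambda\subset[1,\delta n]\times[1,\delta n]\,\}\to0$ as $n\to\infty$ and $\delta\to0$.

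\emph{Step 3 (the character estimate — the main obstacle).} This is where the real work lies. Under the hypothesis every cell $\square\in\lambda$ has content $c(\square)\in[-\delta n,\delta n]$, so the content power sums obey $|\mathbf p_m(\lambda)|:=|\sum_{\square\in\lambda}c(\square)^m|\le n(\delta n)^m$. On the other hand, by Jucys's description of the symmetric functions of the Jucys--Murphy elements — equivalently, by the Kerov--Olshanski polynomiality of normalized characters (see~\cite{VK,Ker}) — the value $\widehat{\chi}^{\,\lambda}((j,1^{\,n-j}))$ on a single $j$-cycle equals $j\,(n)_j^{-1}$, where $(n)_j=n(n-1)\cdots(n-j+1)$, times a universal polynomial in $\mathbf p_1(\lambda),\mathbf p_2(\lambda),\dots$ and in $n$ of controlled degree, and the general $\widehat{\chi}^{\,\lambda}(\rho)$ is, up to lower-order corrections, a product of such single-cycle values; feeding in the content bound and $(n)_j\sim n^{j}$ gives
$$\bigl|\widehat{\chi}^{\,\lambda}(\rho)\bigr|\ \le\ C_\rho\cdot\max\!\bigl(\delta,\ n^{-1/2}\bigr)^{\,k-\ell(\rho)}\ \le\ C_\rho\cdot\max\!\bigl(\delta,\ n^{-1/2}\bigr),$$
the exponent $k-\ell(\rho)$ ($\ell(\rho)=$ number of parts of $\rho$) being $\ge1$ precisely because $\rho\ne1^k$. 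The first nontrivial case is entirely elementary and already exhibits the mechanism: $\widehat{\chi}^{\,\lambda}((2,1^{\,n-2}))=\bigl(\sum_{\square\in\lambda}c(\square)\bigr)\big/\binom n2$, which is at most $\delta n^2/\binom n2\le4\delta$ for $n\ge2$. I expect this uniform character bound for ``small'' diagrams to be the hard part: it is precisely the statement that such $\lambda$ tend to the regular character $(\mathbf 0;\mathbf 0;0)$ of the Thoma simplex, and it packs into one inequality the representation-theoretic heart of the Thoma theorem.

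\emph{Step 4 (assembly).} Finally, given $k$ and $\epsilon>0$: since $\rho_k$ is a metric on the compact simplex of probability measures on the finite set ${\cal T}_k$, there is $\eta>0$ with $\rho_k(\mu,\mu')<\epsilon$ whenever $\sup_t|\mu(t)-\mu'(t)|<\eta$. By Steps~2--3 I would choose $\delta>0$ and $N\in{\Bbb N}$ (for instance $N\ge\delta^{-2}$, so that $\max(\delta,n^{-1/2})=\delta$ for $n>N$, and $N$ large enough to absorb the lower-order terms in Step~3) making $C_k\max_{\rho\ne1^k}|\widehat{\chi}^{\,\lambda}(\rho)|<\eta$, hence $\sup_{\nu\vdash k}|p_{\lambda}^{k}(\nu)-\mu_k(\nu)|<\eta$ and therefore $\sup_{t\in{\cal T}_k}|d_k^{\lambda}(t)-{\rm Pl}_k(t)|<\eta$, for all $n>N$ and all $\lambda\vdash n$ with $\lambda\subset[1,\delta n]\times[1,\delta n]$; then $\rho_k\bigl(d_k^{\lambda}(\cdot),{\rm Pl}_k(\cdot)\bigr)<\epsilon$, as required.
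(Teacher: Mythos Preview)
The paper does not supply its own proof of this lemma: it presents the statement as equivalent to the main uniqueness theorem (Theorem~3) and then refers the reader to Petrov~\cite{P}, remarking only that that proof ``uses the theory of symmetric functions and, in a sense, is a continuation of the paper~\cite{OO1} on shifted Schur functions.''  Your proposal is precisely in that vein.  The reduction in Steps~1--2 to the uniform vanishing of the normalized characters $\widehat{\chi}^{\,\lambda}(\rho)$ for $\rho\ne1^k$ on ``balanced'' diagrams is correct and standard, and your Step~3---bounding those characters via the content power sums and the Kerov--Olshanski polynomiality---is exactly the shifted-symmetric-function mechanism the paper is pointing to.  The only soft spot is that the displayed inequality $|\widehat{\chi}^{\,\lambda}(\rho)|\le C_\rho\,\max(\delta,n^{-1/2})^{\,k-\ell(\rho)}$ needs more than the bare polynomiality statement: one must know the top-degree structure of the Kerov polynomials (equivalently, of the functions $p^{\#}_\rho$ of~\cite{OO1}) to see that every monomial in $n$ and the $\mathbf p_m(\lambda)$ is dominated by $n^{k}\delta^{\,k-\ell(\rho)}$; you rightly flag this as the hard step, and it is indeed what is carried out in~\cite{P,OO1}.  (A side remark: since $\lambda\vdash n$ and $\lambda\subset[1,\delta n]^2$ force $\delta\ge n^{-1/2}$, the $\max$ in your bound is always $\delta$, which slightly simplifies Step~4.)
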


The proof in~\cite{P} uses the theory of symmetric functions and, in a sense, is a continuation of the paper~\cite{OO1} on shifted Schur functions. We will return to the discussion of this proof elsewhere.

Another property of the Plancherel measure in terms of random numberings looks as the quasi-invariance under a lattice shift, more exactly, under passing from a given numbering to the induced one, obtained by omitting the elements of the first row, or the first column, or both. It is easy to prove that such an induction sends the Plancherel measure to an equivalent one. Hence, another equivalent formulation of the uniqueness theorem is as follows.

\begin{proposition}\label{prop1}
The Plancherel measure is the unique central quasi-in\-vari\-ant measure.
\end{proposition}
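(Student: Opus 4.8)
The plan is to deduce Proposition~\ref{prop1} from the preceding uniqueness theorem for nondegenerate central numberings, by showing that ``central and quasi-invariant under the lattice shift'' is, for measures on the path space of the Young graph, equivalent to ``central and nondegenerate.'' One direction is immediate: the Plancherel measure~${\rm Pl}$ is central by the Corollary above, and it was remarked in the text that the induction operation (deleting the first row, the first column, or both) sends~${\rm Pl}$ to an equivalent measure, so ${\rm Pl}$ is a central quasi-invariant measure; this gives existence. For uniqueness it remains to prove that any central measure~$\mu$ on~$\cal T$ that is quasi-invariant under these shifts is necessarily nondegenerate, and then invoke the theorem.

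First I would set up the action making ``quasi-invariance'' precise: the map that omits the first row (respectively column) of a tableau carries an infinite Young tableau to another one, and this descends to a Borel transformation~$S$ of the path space~$\cal T$ (well defined off a null set, since the first row of a Plancherel-typical diagram is infinite). Next I would use the ergodic decomposition of central measures: every central measure on the Young graph is a mixture of ergodic central measures, and by the Vershik--Kerov/Thoma classification the ergodic central measures are indexed by Thoma parameters $(\alpha,\beta)$ with $\sum\alpha_i+\sum\beta_j\le 1$; the Plancherel measure is the single point $\alpha=\beta=0$. The key step is then to compute how the shift~$S$ acts on Thoma parameters: deleting the first row sends the ergodic central measure with parameters $(\alpha_1,\alpha_2,\dots;\beta_1,\beta_2,\dots)$ to the one with the first coordinate~$\alpha_1$ removed and the remaining ones renormalized, and similarly for columns with the $\beta$'s. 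Consequently a degenerate ergodic component (one with some $\alpha_i>0$ or $\beta_j>0$) is moved by~$S$ to a \emph{singular} ergodic component, whereas a quasi-invariant measure must have its ergodic decomposition taken into an equivalent decomposition; the only $S$-fixed point of the parameter space under this renormalizing shift is $\alpha=\beta=0$. Hence the ergodic decomposition of a central quasi-invariant~$\mu$ must be supported on the single point~${\rm Pl}$, i.e.\ $\mu={\rm Pl}$.

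I would organize the argument so that it does not actually presuppose the full Thoma classification but only the part already granted in the excerpt. Concretely: by the Corollary, a central nondegenerate measure is ${\rm Pl}$; so it suffices to show a central quasi-invariant measure is nondegenerate. Suppose not; then with positive probability some proper ideal~$I$ (a finite union of rows and columns, as noted in the proof of the Corollary) has positive density. Deleting the corresponding rows and columns strictly decreases this density on a positive-measure set in a way that cannot be absorbed by an equivalent change of measure after finitely many iterations --- one obtains two mutually singular measures $\mu$ and $S^k\mu$ for $k$ large, contradicting quasi-invariance. Making ``cannot be absorbed'' quantitative is where the combinatorial content sits: one tracks the density of the distinguished ideal as a bounded super- (or sub-)martingale under iteration of~$S$ and shows its limit is~$0$ $\mu$-a.s., forcing $\mu$ onto the nondegenerate locus.

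The main obstacle I anticipate is precisely this last quantitative point: showing that quasi-invariance is incompatible with a positive-density ideal. The shift~$S$ is not measure-preserving, only quasi-invariant with an explicit Radon--Nikodym cocycle, so one must control that cocycle well enough to rule out a nontrivial invariant family of ``partially degenerate'' components. If one is willing to cite the Thoma parametrization of ergodic central measures and the formula for how row/column deletion acts on the parameters, the proof collapses to the one-line observation that $(0,0)$ is the unique fixed point; the honest difficulty, then, is really hidden in that citation, which is itself equivalent to the uniqueness theorem being invoked. I would therefore present the proposition as a genuinely \emph{equivalent} reformulation --- existence is elementary, and uniqueness is a repackaging of the preceding theorem via the dictionary ``quasi-invariant under the shift $\Leftrightarrow$ fixed point of the induced action on ergodic components $\Leftrightarrow$ nondegenerate.''
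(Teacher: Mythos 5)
The paper itself gives no proof of this proposition: it is stated as ``another equivalent formulation of the uniqueness theorem,'' with only the remark that the induction (deleting the first row, column, or both) sends~${\rm Pl}$ to an equivalent measure. Your overall framing --- existence is the easy direction, and uniqueness should reduce to the preceding theorem via the implication ``central $+$ quasi-invariant $\Rightarrow$ nondegenerate'' --- is therefore consistent with what the author intends, and your honest admission that the content is a repackaging of the uniqueness theorem is fair.

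However, the specific mechanism you propose for the reduction has a genuine gap. Your key claim is that the renormalizing shift on Thoma parameters,
$(\alpha;\beta)\mapsto\bigl((\alpha_2,\alpha_3,\dots)/(1-\alpha_1);\ \beta/(1-\alpha_1)\bigr)$,
has $\alpha=\beta=0$ as its \emph{unique} fixed point. This is false: taking $\beta=0$ and $\alpha_k=\alpha_1(1-\alpha_1)^{k-1}$ (a geometric sequence, e.g.\ $\alpha=(1/2,1/4,1/8,\dots)$, a perfectly legitimate point of the Thoma simplex with $\sum_k\alpha_k=1$) gives $\alpha_{k+1}/(1-\alpha_1)=\alpha_k$ and $\beta_j/(1-\alpha_1)=\beta_j$, so these parameters are fixed by row deletion; since $\beta_1=0$ they are also fixed by column deletion and by deleting both. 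These are \emph{degenerate} central measures (the first row of the lattice is a proper ideal of positive density $\alpha_1$), yet the density of that ideal is \emph{constant}, not decreasing, under iteration of~$S$. Consequently both versions of your argument collapse on this family: the ``one-line fixed-point observation'' is wrong, and the super-martingale of densities does not tend to~$0$. To exclude these measures you would have to show that $S_*\mu$ is singular to~$\mu$ even though all limiting row and column densities agree --- a strictly finer statement than anything tracked by densities or by the first-order Thoma parameters, and nothing in your proposal addresses it. (A secondary, smaller issue: you tacitly identify the push-forward $S_*\mu$ of an ergodic central measure with the ergodic central measure having the shifted parameters; $S_*\mu$ need not be central, so even when the parameters change one should argue singularity via the law of large numbers for densities rather than via the classification.) So: existence and the logical shape of the reduction are right, but the implication ``quasi-invariant $\Rightarrow$ nondegenerate,'' which carries all the weight, is not established.
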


\subsection{General locally finite posets}

All definitions from the previous section make sense for arbitrary locally finite posets, i.e., arbitrary distributive lattices that are Hasse diagrams of lattices of finite ideals of posets. We mean the notions of monotone numberings, random numberings, and the properties of measures on the space of numberings of a poset (or the space of tableaux, the path space of the graph of ideals): centrality, nondegeneracy, quasi-invariance, Markov property, ergodicity, etc.

The existence of at least one monotone nondegenerate central numbering for an arbitrary poset can usually be easily proved by compactness considerations.

However, the uniqueness of a central nondegenerate measure holds not for all posets.

\begin{definition}
A countable locally finite poset is said to be \textbf{rigid} if a central nondegenerate Markov monotone numbering on this poset exists and is unique.
\end{definition}

The above theorem says that the poset~${\Bbb Z}^2_+$ is rigid.

\begin{problem}
Are the posets ${\Bbb Z}^d_+$ for $d>2$ rigid?
\end{problem}

A direct proof of the uniqueness theorem in any of the suggested formulations, e.g., of Proposition~\ref{prop1}, would reveal deep properties of numberingss of rigid lattices. This would mean a considerable progress in the asymptotic theory of posets and in the theory of traces of  $C^*$-algebras.

Apparently, an example of a nonrigid poset is the poset with the set of elements~${\Bbb Z}^d_+$ and the partial order defined as follows:
$$(x,y)\succ (u,v) \mbox{ if } y>v, \quad (x,0)\succ (u,0) \mbox{ if } x>u,$$
and $(x,y)$ and $(u,v)$ are incomparable if $x\ne u$ and $y$ or $v$ is not zero.

The deformation of the order structure on~${\Bbb Z}_+^2$ that turns rigidity into nonrigidity reminds of percolation phenomena.

All interpretations and properties of the Plancherel measure, including those discussed in the previous sections, are closely intertwined. We will consider another invariance property, but state it in a more general situation.

It is not difficult to show that every central measure on the path space of a graded graph is Markov, but not stationary, because the state space  depends on~$n$. Nevertheless, it has the property of invariance with respect to a ``quasi-shift,'' which we will call the {\it quasi-stationarity} of a nonstationary Markov chain. For the Young graph, this property is a generalization of the combinatorial notion of the Sch\"utzenberger (jeu de taquin) transformation, which in the case of central measures and the Plancherel measure for the lattice~${\Bbb Z}^2_+$ was studied in \cite{VKS,RS,S}. Here we will give a general definition.

We define a (deterministic) map from the path space of a distributive lattice to itself, calling it the transfer. For the Young lattice, this is an infinite analog of the Sch\"utzenberger transformation. It is most convenient to describe the transfer in terms of numberings.

Consider an arbitrary numbering (sequence of elements) $t_1=\emptyset, t_2, t_3, \dots$ of a poset, and associate with it a new numbering $t'_1=\emptyset, t'_2, t'_3, \dots$ as follows: if $t_{n+1}\succ t_n$, then $t'_n=t_n$; and if $t_n$ and $t_{n+1}$ are not comparable, then $t'_n=t_{n+1}$, $n=2,3 \dots$. This is a natural generalization of the ordinary shift; distributivity guarantees that all 2-intervals in the graph are either segments or rhombi. This definition makes sense for more general graphs than Hasse diagrams of distributive lattices, namely, we need only the existence of a ``translation'' of edges for adjacent vertices.

Thus, we can consider the class of Markov chains for which there is a well-defined shift (transfer) in the path space that shifts paths in time in a generalized sense.

\begin{definition}
Consider a Markov (in general, nonstationary) compactum with a well-defined transfer, e.g., the path space of the Hasse diagram of a distributive lattice. A Markov measure on this compactum is said to be quasi-stationary if it is invariant under the transfer.
\end{definition}

\begin{problem}
Every central measure on the Markov compactum of paths in the Hasse diagram of a distributive lattice is quasi-stationary.
\end{problem}

For the  Young graph, this is true and follows from the main result of~\cite{VKS}. Most likely, quasi-stationarity also holds for an arbitrary distributive lattice. The Sch\"utzenberger transformation, i.e., the transfer for the Young graph, has been recently considered in~\cite{RS,S} for other reasons. The main result is that the Sch\"utzenberger transformation on the path space equipped with any central ergodic measure (Thoma measure) is metrically isomorphic to a Bernoulli shift. We will consider these problems in another paper. All these considerations reveal a close relation between this circle of problems and ergodic theory.

\section{The third uniqueness theorem: representations of the infinite symmetric group}

In this section, we essentially consider a corollary of the previous theorems for the representation theory of symmetric groups. In its first form, this corollary is equivalent to the uniqueness theorem and, possibly, has its own independent proof. The second formulation (with a bound) contains an important complement to the uniqueness theorem concerning properties of the Plancherel measure. We state our theorem as a uniqueness theorem for the regular representation of the infinite symmetric group, more exactly, characterize this representation in natural terms.

We will consider traceable factor representations of the group~${\frak S}_{\Bbb N}$, i.e.,  normalized indecomposable characters of~${\frak S}_{\Bbb N}$,  already discussed above. Recall that a character is a function~$\chi(\cdot)$ on~${\frak S}_{\Bbb N}$ satisfying the conditions
$$\chi(e)=1,\quad \chi(gh)=\chi(hg), \quad
||\chi(g_i \cdot g_j^{-1})||_{i,j=1}^n \geq 0 $$
for every $n \in {\Bbb N}$ and any $g_1, \dots g_n \in G$.

Indecomposability means that $\chi(\cdot)$  is an extreme point of the simplex of all characters.

Every indecomposable character~$\chi$ generates a representation~$\pi_{\chi}$ of the group, which is either finite-dimensional, or infinite-dimensional in a von~Neumann factor of type~II$_1$.

A nontautological realization of such representations was given in~\cite{VK-81}, but we will not use it.

The link between the theorem on the exponential function (Section~1) and that on the regular representation and its character (see below) is given by the correspondence
 $$ \exp (z)\dashleftarrow\dashrightarrow \chi_e =\delta_e(g),$$
or, in terms of generating functions,
 $$F_{\chi}(z)=\exp{\sum_n\frac{\chi(g_n)}{n}z^n},$$
 where $g_n\in S_{\infty}$ is a cycle of length $n$; this is a totally positive function.

The most important factor representation of~${\frak S}_{\Bbb N}$ is the (say, left) regular representation. Denote it by~$\Pi$. As mentioned above, it is this representation that corresponds to the Plancherel measure, and the corresponding character is $\chi(g)=\delta_e(g)$, the $\delta$-function at the group identity. It turns out that the regular representation~$\Pi$ and, consequently, its character (i.e., the trace in a von~Neumann factor of type~II$_1$) can be axiomatized as follows.

We will consider restrictions of representations and, in particular, the representation $\Pi$ to finite subgroups ${\frak S}_n$ and define the following sequence:
$$Y_{\chi}(n)\equiv \max \{k<n: \; \exists S_k \subset S_n ,\; \exists h\in H_{\pi_\chi}: \forall g\in S_k \quad \pi_{\chi}(g)h=\pm h \}.$$
In other words, we fix the greatest rank of a subgroup for which the restriction of~$\pi_\chi$ is either
the trivial or the sign representation: $\chi\big|_{S_k}=1$ or  $\chi\big|_{S_k}=\operatorname{sgn}(\cdot)$.

\begin{theorem}[uniqueness and characterization of the regular representation]

{\rm1)} There is a unique factor representation for which   $Y_{\chi}(n)=o(n)$, namely, the regular representation.

{\rm 2)} The regular representation is the unique factor representation for which $Y_{\chi}(n)=O(\sqrt n)$.
\end{theorem}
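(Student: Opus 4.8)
The plan is to reduce the statement to the second uniqueness theorem (the rigidity of $\mathbb{Z}_+^2$, equivalently Thoma's classification) via the Thoma parametrization of characters, and to read off the quantity $Y_\chi(n)$ from the Thoma parameters. Recall that by Thoma's theorem every indecomposable character $\chi$ of $\mathfrak{S}_{\mathbb{N}}$ is determined by two nonincreasing sequences $\alpha_1\ge\alpha_2\ge\cdots\ge 0$ and $\beta_1\ge\beta_2\ge\cdots\ge 0$ with $\sum_i\alpha_i+\sum_i\beta_i\le 1$, and the regular representation corresponds to $\alpha=\beta=0$ (the character $\delta_e$). Under the correspondence with central measures on $\mathbf{Y}$, the parameters $\alpha_i$ (resp.\ $\beta_i$) are the almost-sure limiting frequencies of the $i$th row (resp.\ $i$th column) of the random Young diagram $\lambda^{(n)}$ at level $n$; the Plancherel measure is precisely the unique \emph{nondegenerate} one, i.e.\ the one with all these frequencies vanishing, by the Corollary in Section~3 and the Theorem in Section~4. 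So it suffices to show: (i) if $\chi\ne\delta_e$ then $Y_\chi(n)$ grows linearly in $n$ (which kills part~1 and \emph{a fortiori} part~2 for all non-regular characters), and (ii) for $\chi=\delta_e$ one has $Y_{\delta_e}(n)=O(\sqrt n)$, and this bound is essentially sharp so $Y_{\delta_e}(n)\ne o(n)$ is automatic while $O(\sqrt n)$ does hold.

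For step (i): the key observation is that $\chi\big|_{S_k}$ contains the trivial representation iff the GNS vector is fixed by $S_k$, and by Frobenius reciprocity the multiplicity of the trivial (resp.\ sign) representation of $S_k$ in $\pi_\chi\big|_{S_k}$ is governed by the probability, under the associated central measure, that the first row (resp.\ column) of the diagram at level $k$ already has length $k$ — more precisely one uses the branching/coherent-system description: $Y_\chi(n)$ is (up to the trivial-vs-sign dichotomy) the largest $k$ such that the conditional measure at level $k$ puts positive mass on the single-row diagram $(k)$ or the single-column diagram $(1^k)$, \emph{viewed inside} the level-$n$ picture. If $\alpha_1>0$, then with probability one the first row of $\lambda^{(n)}$ has length $\sim\alpha_1 n$, and restricting to the subgroup $S_k$ with $k\approx\alpha_1 n$ supported on the cells of that row yields an $S_k$-fixed vector; symmetrically $\beta_1>0$ gives a sign-type vector from the first column. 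Hence $\chi\ne\delta_e$ (equivalently $\alpha_1>0$ or $\beta_1>0$) forces $Y_\chi(n)\ge c\,n$ for some $c>0$ and all large $n$. I would make ``restricting to the cells of the first row gives a fixed vector'' precise using the Gelfand–Tsetlin/Young basis description of $\pi_\chi$ and the fact that the stabilizer of a horizontal strip acts trivially on the corresponding extreme Young-basis line.

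For step (ii): with $\chi=\delta_e$ the associated measure is the Plancherel measure $\mathrm{Pl}$, and $Y_{\delta_e}(n)$ equals (up to constants) the largest $k$ for which $\mathrm{Pl}_n$ gives positive conditional weight, inside a level-$n$ tableau, to the event that the first $k$ entries form a row or a column — i.e.\ it is controlled by the length of the longest initial increasing (or decreasing) run, equivalently, via RSK, by the first row/column of the Plancherel-random diagram at the relevant scale. The Logan–Shepp–Vershik–Kerov result that the first row of a Plancherel-random diagram of size $m$ is $\sim 2\sqrt m$ gives exactly the $O(\sqrt n)$ bound: one cannot embed an $S_k$ with a trivial- or sign-fixed vector for $k$ larger than $\asymp\sqrt n$. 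The matching lower bound $Y_{\delta_e}(n)\gtrsim\sqrt n$ (so that $Y_{\delta_e}(n)\ne o(n)$ is trivially true but, more importantly, the exponent in part~2 is optimal and the statement is not vacuous) comes from the existence of increasing subsequences of length $\sim 2\sqrt n$ in a Plancherel-random tableau.

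The main obstacle is step (i)'s \emph{quantitative} and uniform version: translating ``$\alpha_1>0$'' into ``$Y_\chi(n)\ge cn$ for \emph{every} sufficiently large $n$'', with an explicit subgroup $S_k\subset S_n$ and an explicit $\pm$-fixed vector, rather than merely an asymptotic almost-sure statement about row frequencies. Equivalently, one must show the map $\chi\mapsto(Y_\chi(n)/n)$ separates the non-regular characters from $\delta_e$ uniformly — and here one genuinely invokes the uniqueness Theorem of Section~4 (the rigidity of $\mathbb{Z}_+^2$): it guarantees there is \emph{no} exotic non-Plancherel central measure lurking with sublinear row frequencies, so the dichotomy ``$\alpha_1=\beta_1=0$ versus $\max(\alpha_1,\beta_1)>0$'' is exhaustive and the conclusion $Y_\chi(n)=o(n)\Rightarrow\chi=\delta_e$ follows. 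In other words, part~1 is, after the combinatorial translation above, \emph{equivalent} to the second uniqueness theorem, while part~2 is the additional input coming from the fine $2\sqrt n$ asymptotics of the Plancherel measure.
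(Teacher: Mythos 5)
Your proposal follows essentially the same route the paper intends: part 1 is reduced, via the Thoma parameters and the identification of $Y_\chi(n)$ with the length of the first row/column of the associated random diagram, to the second uniqueness theorem (nondegeneracy of the Plancherel measure), and part 2 is obtained from the Vershik--Kerov $2\sqrt n$ asymptotics of the first row under the Plancherel measure — exactly the two ingredients the paper cites, with the dichotomy ``$Y_\chi(n)$ grows linearly or as $2\sqrt n$'' emerging in the same way. The paper only sketches this argument, so your fleshed-out reduction (including the Frobenius-reciprocity/branching justification that $\pi_\lambda\big|_{S_k}$ contains the trivial or sign representation precisely when $\lambda_1\ge k$ or $\lambda'_1\ge k$) is consistent with and somewhat more explicit than the text.
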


The second claim follows from the theorem of~\cite{VK-85} which says that the growth of~$Y_{\chi}(n)$ for the regular representation is exactly $Y_{\chi}(n)=2\sqrt n$.

Thus, $Y_n$ grows either linearly, or as $2\sqrt n$, and the latter occurs only in the case of the regular representation~$\Pi$ and the character~$\chi(g)=\delta_e(g)$.

In this form,  the theorem is stronger than the uniqueness theorems, since it implies the total positivity theorem from Section~1 and the uniqueness theorems for the Plancherel measure, but the strengthening is achieved by an additional bound on the growth of the first row of the Young diagram.

\end{document}